\documentclass[smallextended,referee,envcountsect]{svjour3}

\usepackage{float, enumerate, graphicx, amssymb, amsmath , color, longtable, multirow, lineno,url}
\usepackage{graphicx}
\usepackage[justification=centering]{caption}
\usepackage{amsmath}

\usepackage[active]{srcltx}
\usepackage{pifont}
\usepackage{paralist} 
\usepackage{amssymb}
\usepackage{authblk}
\usepackage{multirow,booktabs}
\usepackage{graphicx}
\graphicspath{{figures/}}
\usepackage{subfigure}
\usepackage{epsfig}
\usepackage{epstopdf}
\usepackage{longtable,booktabs}

\numberwithin{assumption}{section}
\numberwithin{equation}{section}

\usepackage{algorithm}
\usepackage{algorithmic}
\usepackage[a4paper,left=2cm,right=2cm,bottom=2.3cm,top=2.3cm]{geometry}
\usepackage[colorlinks,linkcolor=blue,citecolor=blue,                  bookmarksnumbered=true,
bookmarksopen=true,
colorlinks ]{hyperref}
\smartqed
\usepackage{graphicx}
\journalname{  }
\begin{document}
	
	\title{ A unified framework for inexact adaptive stepsizes in the gradient methods, the conjugate gradient methods  and the quasi-Newton methods  for strictly convex quadratic optimization   }
\author{ Zexian Liu   }
\institute{
 	Zexian Liu,   e-mail: liuzexian2008@163.com
	\at  School of Mathematics and Statistics, Guizhou University, Guiyang, 550025, China 
}
\date{Received: date / Accepted: date}
	
	\maketitle
	
	\begin{abstract} 
	The inexact adaptive stepsizes for the conjugate gradient method and  the quasi-Newton method are very rare. 		The exact stepsizes in the gradient method, the conjugate gradient method and the  quasi-Newton method for strictly convex quadratic optimization have a unified framework, while the unified framework for inexact adaptive stepsizes  in the gradient method, the conjugate gradient method and the quasi-Newton method for strictly convex quadratic optimization  still remains unknown.   Based on the above observations,   we propose a  unified framework for  inexact adaptive stepsizes in the gradient method, the conjugate gradient method  and the quasi-Newton method  for strictly convex quadratic optimization, which is called approximately optimal stepsize. The global convergence and the convergence rate of the gradient method with the approximately optimal stepsize are established by exploring  the relation between the approximately optimal stepsize and the famous Barzilai-Borwein (BB) stepsizes.   Some  numerical results are presented, which confirm   the remarkable  numerical advantage of the gradient method, the conjugate gradient method and the quasi-Newton method with the unified framework for  inexact adaptive stepsizes. Some open problems about the gradient method, the conjugate gradient method and the  quasi-Newton method with approximately optimal stepsize  are raised.
	\end{abstract}
	\keywords{ Approximately optimal stepsize \and   Unified framework \and Gradient   method  \and Conjugate gradient   method \and quasi-Newton method }
	\subclass{90C06 \and 65K}
	\section{Introduction} 
Consider the following strictly convex quadratic unconstrained optimization problem:
\begin{equation}\label{eq:QuadraticMin} \mathop {\min }\limits_{x \in {R^n}} f(x) = \frac{1}{2}{x^T}Ax - {b^T}x,  \end{equation} 
where $b\in R^n$, and $A \in R^{n \times n}$ is symmetric and positive definite. 

Given  an initial point $ x_0 $, the iterative method for solving problem \eqref{eq:QuadraticMin} takes the following form:  \begin{equation} \label{iterativeform}
x_{k+1} = x_k + \alpha_k d_k, \;\;\;k=0,1,2,\cdots
\end{equation} 
where $\alpha_k$ is the stepsize, and $ d_k $ is the search direction.

(i) If $d_k= -g_k 		$, then the iterative method corresponds to the gradient method.

(ii) If $d_k		$ is defined by
\[{d_k} = \left\{ \begin{array}{l}
- {g_k},\;\;\;\;\;\;\;\;\;\;\;\;\;\;\;\;\; \text{if}\;k = 0,\\
- {g_k} + {  \beta _k} {d_{k - 1}},\;\;\;\text{if}\;k > 0,
\end{array} \right.\]
where ${g_k} = \nabla f\left( {{x_k}} \right)$, $ \beta_k $ is the conjugate parameter, the iterative method corresponds to the  conjugate gradient method.    Some  well-known formulae for $ \beta_k $ are called the
Fletcher-Reeves (FR) \cite{Fletcher1964Function}, Hestenes-Stiefel (HS)   \cite{Hestenes1952Methods}, Polak-Ribi\`ere-Polyak (PRP)   \cite{Polyak1969The,Polak1969Note}   and Dai-Yuan (DY)  \cite{Dai1999A}
formulae, and are given by
\[\beta _k^{FR} = \frac{{{{\left\| {{g_{k}}} \right\|}^2}}}{{{{\left\| {{g_{k-1  }}} \right\|}^2}}},\;\;\;\;\beta _k^{HS} = \frac{{g_{k }^T{y_{k-1  }}}}{{d_{k-1  }^T{y_{k-1  }}}},\;\;\;\beta _k^{PRP} = \frac{{g_{k}^T{y_{k-1 }}}}{{{{\left\| {{g_{k-1 }}} \right\|}^2}}},\;\;\;\beta _k^{DY} = \frac{{{{\left\| {{g_{k}}} \right\|}^2}}}{{d_{k-1  }^T{y_{k-1  }}}},\]
where $y_{k-1}=g_k -g_{k-1}.$

(iii) If $d_k		$ is defined by $$d_k= - B_k^{ - 1}{g_k},$$  
where  $B_k$ is a symmetric and positive definite approximation to the Hessian matrix, then the iterative method corresponds to the quasi-Newton method. The Broyden family method \cite{Broyden1967QuasiNewtonMA}  takes the following update formula
\begin{align}\label{Broyden2}
B_{k+1}^{\theta} =  B_k + \frac{y_k y_k^\top}{s_k^\top y_k} - \frac{B_k s_k s_k^\top B_k}{s_k^\top B_k s_k} + \theta \, \omega_k \omega_k^\top,
\end{align}
where $ s_k= x_{k+1}-x_k$, and
\begin{equation}\label{omega}
\omega_k = \sqrt{s_k^\top B_k s_k} \left(\frac{y_k}{s_k^\top y_k} - \frac{B_k s_k}{s_k^\top B_k s_k} \right).\nonumber
\end{equation}
When \(\theta = 1\), the formula \eqref{Broyden2} reduces to the Davidon-Fletcher-Powell (DFP) formula \cite{Davidon1991SIOPT,FletcherPowell1963CompJ}; when \(\theta = 0\), it reduces to the Broyden-Fletcher-Goldfarb-Shanno (BFGS) formula \cite{Broyden1970IMA,Fletcher1970CompJ,Goldfarb1970MC,Shanno1970MC}.

The  stepsize is   the exact stepsize \cite{Cauchy1847} if it  satisfies  
\begin{equation}\label{exactstepsize}	
{\alpha _k} = \arg \mathop {\min }\limits_{\alpha  > 0} \;\;f({x_k} + \alpha {d_k}) =  - \frac{{g_k^T{d_k}}}{{d_k^TA{d_k}}}.\end{equation}     
Though the exact stepsize possesses  nice theoretical properties, it  generally requires expensive cost or is often difficult to obtain, which makes it impossible to be  applied widely in  practice. 

The inexact  stepsize of gradient method for convex quadratic optimization  has   been studied widely \cite{BB1988,Frassoldati2008,DaiYuanGM2005Analysis,AM2003DY,ABBDai2006,SDC2014Asmundis,Raydan1993On,DaiLiao2002Rlinear,Yuanstepsize2006,Xie2025Adaptive,Huang2019HDL,Raydan1997GBB,Serafino2018On,Friedlander1998Gradient}. However, research results about the inexact adaptive stepsizes of the conjugate gradient method  and the quasi-Newton method  are scarce. This may be attributed to the complexity of the search directions of the conjugate gradient methods  and the quasi-Newton methods, in contrast to the search direction $-g_k$ of the gradient method. 	The exact stepsizes in  gradient method, conjugate gradient method and  quasi-Newton method for strictly convex quadratic optimization have a unified framework  $-\frac{{g_k^T{d_k}}}{{d_k^TA{d_k}}}$, while the unified framework for inexact adaptive stepsizes  in  the gradient method, the conjugate gradient method and the quasi-Newton method  for strictly convex quadratic optimization  still remains unknown.  A question is naturally to be asked: 

\textit{Can we develop a unified framework for inexact adaptive stepsizes  in the gradient method, the conjugate gradient method and the quasi-Newton method  for strictly convex quadratic optimization? }

 It is interesting to study the above problem. To address it,   we propose a  unified framework for  inexact adaptive stepsizes in the gradient method, the conjugate gradient method and the quasi-Newton method  for strictly convex quadratic optimization, which is called approximately optimal stepsize. The global convergence and the convergence rate of the gradient method with the approximately optimal stepsize are established by exploring the relation  between the approximately optimal stepsize and the famous Barzilai-Borwein (BB) stepsizes \cite{BB1988}. Some  numerical results are presented, which confirm the remarkable  numerical advantage of the gradient method, the conjugate gradient method and the quasi-Newton method with the unified framework.

	The remainder of this  paper is organized as follow. In Section 2, we present  a  unified framework for  inexact adaptive stepsizes in the gradient method, the conjugate gradient method and the quasi-Newton method  for strictly convex quadratic optimization, which is called approximately optimal stepsize.  In Section 3 we establish   the global convergence and convergence rate of the gradient method with the approximately optimal stepsize  by exploring the relation  between the approximately optimal stepsize and the famous Barzilai-Borwein (BB) stepsizes.    Some numerical experiments are conducted in Section 4.    Conclusions and some open problems    are given in the last section.

\section{ The   unified framework for  inexact adaptive stepsizes ---approximately optimal stepsize} 

We describe the motivation as follows. 

(i)The unified framework for inexact adaptive stepsizes  in  the gradient method, the conjugate gradient method and the quasi-Newton method  for strictly convex quadratic optimization  still remains unknown. 

 (ii)The search directions of the  conjugate gradient methods  and quasi-Newton methods for strictly convex quadratic optimization possesses  nice theoretical property such as conjugacy under the exact line search, which makes  the two methods enjoy twice finite termination property. The theoretical merits of conjugacy and finite termination endow the conjugate gradient methods and the quasi-Newton methods with considerable potential for delivering efficient performance in both convex and nonconvex optimization. Therefore, it   is of great significance to study inexact adaptive stepsizes of the gradient method, the conjugate gradient method  and the quasi-Newton method, especially for a  unified framework for their  inexact adaptive stepsizes. 
 
 (iii)The exact stepsizes in the gradient methods, the conjugate gradient methods and the quasi-Newton methods for strictly convex quadratic optimization have a unified framework  $-\frac{{g_k^T{d_k}}}{{d_k^TA{d_k}}}$. To develop a      unified framework for their  inexact adaptive stepsizes, one should ignore the    specific forms of their different search direction  to some extent  and borrow the idea of the exact stepsize \eqref{exactstepsize} to construct the  unified framework for their  inexact adaptive stepsizes.

Based on the above observations, we propose a new type of inexact adaptive  stepsize for the gradient methods, the conjugate gradient methods and the quasi-Newton methods, and call it approximately optimal stepsize, which is defined as follows.

  \textbf{Approximately optimal stepsize.}	Suppose $f$ is continuously differentiable, $d_k$ is the search direction of the  gradient methods, the conjugate gradient methods  or the quasi-Newton methods, and $\phi_k(\alpha)$ is an approximation model of the line search function $f(x_k + \alpha d_k)$. If a positive number $\alpha_k^{ {AOS}}$ satisfies
\[{\alpha_k ^ {AOS} } = \arg \;\mathop {\min }\limits_{\alpha >0}  {\phi_k }(\alpha ),　\]	
then, 	$\alpha_k^{ {AOS}}$  is called approximately optimal stepsize of the corresponding optimization method related to  $\phi_k(\alpha)$. 

\noindent\textbf{Remark 1} If $\phi_k(\alpha) = f(x_k + \alpha d_k)$, then the approximately optimal stepsize ${\alpha_k ^ {AOS} }$ reduces to the exact   stepsize, namely,  the optimal   stepsize. This is   the reason why it is called   approximately optimal stepsize.

\noindent\textbf{Remark 2} If ${\phi _k}\left( \alpha  \right) = {f_k} + \alpha g_k^T{d_k} + \frac{1}{2}{\alpha ^2}d_k^T{\bar B_k}{d_k}$, where $$ {\bar B_k} = \frac{{{{\left\| {{y_{k - 1}}} \right\|}^2}}}{{s_{k - 1}^T{y_{k - 1}}}}I - \frac{{{{\left\| {{y_{k - 1}}} \right\|}^2}}}{{s_{k - 1}^T{y_{k - 1}}}}\frac{{{s_{k - 1}}s_{k - 1}^T}}{{{{\left\| {{s_{k - 1}}} \right\|}^2}}} + \frac{{{y_{k - 1}}y_{k - 1}^T}}{{s_{k - 1}^T{y_{k - 1}}}} ,$$ then 
\begin{equation}\label{AOS}
\alpha _k^{AOS} =  - 	\frac{{g_k^T{d_k}}}{{d_k^T{\bar B_k}{d_k}}}.
\end{equation}
Obviously, approximately optimal stepsize \eqref{AOS} is independent of any parameter of $f$ and thus is adaptive.  When $ d_k =-g_k $, the gradient method with \eqref{AOS} is called GM\_AOS; when   $ d_k =-g_k + \beta_k ^{DY}d_{k-1},$ the conjugate gradient (CG) method  with \eqref{AOS} is called CG\_AOS;  when   $ d_k =-B_k^{-1}g_k $ with BFGS formula---\eqref{Broyden2} with $\theta=0$,  BFGS method  with \eqref{AOS} is called BFGS\_AOS. It is noted that  $\beta_k ^{DY}$ is selected  in   CG\_AOS due to the nice theoretical  property of the DY method \cite{Dai1999A}, and BFGS formula is selected in BFGS\_AOS due to the nice theoretical  property and numerical effectiveness of the BFGS method.   Therefore, \textbf{approximately optimal stepsize \eqref{AOS} can be regarded as a unified framework for inexact adaptive stepsizes in the gradient methods,  the conjugate gradient methods and the quasi-Newton methods for strictly convex quadratic optimization.} 

\noindent\textbf{Remark 3} It follows from the definition of approximately optimal stepsize that different approximation model leads to different approximately optimal stepsize, and we can design stepsize by constructing  approximation models of line search function. 

\noindent\textbf{Remark 4} If $d_k =-g_k$, then the  approximately optimal stepsize is the stepsize proposed by Liu et al. \cite{LiuGMAOS2018} in 2018. Some results about approximately optimal stepsizes of gradient method can be seen in these paper \cite{LiuGMAOS2018,LiuGMAOS2024,LiuGMAOSOPTL2018,LiuGMAOS2018NM,LiuGMAOS2018JCAM,LiuGMAOS2022RAIRO,LiuGMAOS2019JOTA}.

\noindent\textbf{Remark 5} Although the   strictly convex quadratic optimization is considered in the paper, the definition of  the   approximately optimal stepsize is also applicable to those cases of convex optimization and general unconstrained optimization.

\section{Some theoretical results about the gradient method with approximately optimal stepsize (GM\_AOS)}

 By setting $ d_k =-g_k  $ in  \eqref{AOS},  we obtain the approximately optimal stepsize for gradient method, which is described as follows.
\begin{equation}\label{GMAOS}\alpha _k^{AOS} =- 	\frac{{g_k^T{d_k}}}{{d_k^T{\bar B_k}{d_k}}}= \frac{{{{\left\| {{g_k}} \right\|}^2}}}{{\frac{{{{\left\| {{y_{k - 1}}} \right\|}^2}}}{{s_{k - 1}^T{y_{k - 1}}}}\left( {{{\left\| {{g_k}} \right\|}^2} - \frac{{{{\left( {g_k^T{s_{k - 1}}} \right)}^2}}}{{{{\left\| {{s_{k - 1}}} \right\|}^2}}}} \right) + \frac{{{{\left( {g_k^T{y_{k - 1}}} \right)}^2}}}{{s_{k - 1}^T{y_{k - 1}}}}}}\end{equation}	

\begin{theorem} 
	Suppose that $f\left( x \right)  $ is given by \eqref{eq:QuadraticMin}. Then the approximately optimal stepsize \eqref{GMAOS} satisfies  the following relation:
	\[\frac{1}{2}\alpha _k^{B{B_2}} < \alpha _k^{AOS} < 2\alpha _k^{B{B_1}},\]	
	where  	 	$  \alpha _k^{B{B_1}} = {\left\| {{s_{k - 1}}} \right\|^2}/s_{k - 1}^T{y_{k - 1}}  $ and $  \alpha _k^{B{B_2}} = s_{k - 1}^T{y_{k - 1}}/{\left\| {{y_{k - 1}}} \right\|^2} $   \cite{BB1988}. 
\end{theorem}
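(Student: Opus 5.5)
The plan is to write $\alpha_k^{AOS}=\|g_k\|^2/(g_k^T\bar B_k g_k)$, which is just \eqref{GMAOS} with $d_k=-g_k$. The statement then reduces to two-sided Rayleigh-quotient bounds for the symmetric matrix $\bar B_k$:
\[
\frac{s_{k-1}^Ty_{k-1}}{2\|s_{k-1}\|^2}\|g_k\|^2 < g_k^T\bar B_k g_k < \frac{2\|y_{k-1}\|^2}{s_{k-1}^Ty_{k-1}}\|g_k\|^2 .
\]
Throughout I assume $g_k\neq 0$ and $s_{k-1}\neq 0$. Since $A$ is symmetric positive definite, $s_{k-1}^Ty_{k-1}=s_{k-1}^TAs_{k-1}>0$. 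Write $s=s_{k-1}$, $y=y_{k-1}$, $t=s^Ty$, $c=\|y\|^2/t$, and $P=I-ss^T/\|s\|^2$, so that $\bar B_k=cP+yy^T/t$.

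\textbf{Upper bound on $g_k^T\bar B_kg_k$ (gives $\alpha_k^{AOS}>\frac12\alpha_k^{BB_2}$).} Use $g_k^TPg_k\le\|g_k\|^2$ and Cauchy--Schwarz, $(g_k^Ty)^2\le\|g_k\|^2\|y\|^2$. Together these give $g_k^T\bar B_kg_k\le 2c\|g_k\|^2$. For strictness: equality in the first inequality forces $g_k\perp s$, and equality in the second forces $g_k\parallel y$. Both together would give $s^Ty=0$, which contradicts $t>0$. Hence the inequality is strict.

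\textbf{Lower bound on $g_k^T\bar B_kg_k$ (gives $\alpha_k^{AOS}<2\alpha_k^{BB_1}$).} I will bound the smallest eigenvalue of $\bar B_k$. Set $e=s/\|s\|$ and decompose $y=pe+qu$ with $u\perp e$ a unit vector, $p=t/\|s\|>0$ and $q\ge0$ (if $q=0$, pick any unit $u\perp e$).
\begin{itemize}
\item On the orthogonal complement of $\mathrm{span}\{e,u\}$, $\bar B_k$ acts as $cI$. Here $c=(p^2+q^2)/t\ge p^2/t=t/\|s\|^2$, which strictly exceeds $t/(2\|s\|^2)$.
\item On $\mathrm{span}\{e,u\}$, $\bar B_k$ is represented by the $2\times2$ matrix with entries $p^2/t$, $pq/t$, and $(p^2+2q^2)/t$.
\end{itemize}
The $2\times2$ block has trace $T=2(p^2+q^2)/t$ and determinant $D=p^2(p^2+q^2)/t^2>0$. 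Both eigenvalues are therefore positive, and $\lambda_{\max}<T$. Consequently
\[
\lambda_{\min}=\frac{D}{\lambda_{\max}}>\frac{D}{T}=\frac{p^2}{2t}=\frac{t}{2\|s\|^2}.
\]
Every eigenvalue of $\bar B_k$ thus strictly exceeds $s^Ty/(2\|s\|^2)$. This gives $g_k^T\bar B_kg_k>\frac{s^Ty}{2\|s\|^2}\|g_k\|^2$, i.e. $\alpha_k^{AOS}<2\alpha_k^{BB_1}$.

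\textbf{Main obstacle.} The delicate step is the lower bound. $\bar B_k$ satisfies the secant equation $\bar B_ks=y$, and its small eigenvalue comes from the interaction between the $e$ and $u$ directions. A naive termwise estimate fails there, which is why the reduction to the explicit $2\times2$ block is needed. I expect only routine care in checking the block entries; in particular the $uu$ entry is $c+q^2/t$.
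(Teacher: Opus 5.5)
Your proof is correct. It reaches the result by a somewhat different route than the paper.

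The paper invokes Lemma 3.1 of the cited 2024 work of Liu et al.\ for the closed-form extreme eigenvalues
$\lambda_{\max,\min}(\bar B_k)=\frac{1}{\alpha_k^{BB_2}}\pm\sqrt{\frac{1}{\alpha_k^{BB_2}}\bigl(\frac{1}{\alpha_k^{BB_2}}-\frac{1}{\alpha_k^{BB_1}}\bigr)}$.
It then bounds these two numbers. It gets $\lambda_{\max}<2/\alpha_k^{BB_2}$ by dropping the $-1/\alpha_k^{BB_1}$ term under the root, and $\lambda_{\min}>1/(2\alpha_k^{BB_1})$ by completing the square under the root.

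You do two things differently.
\begin{itemize}
\item For the upper bound on $g_k^T\bar B_kg_k$ you never use eigenvalues. You bound the Rayleigh quotient directly with $g_k^TPg_k\le\|g_k\|^2$ and Cauchy--Schwarz, and your equality analysis ($g_k\perp s$ and $g_k\parallel y$ together would force $s^Ty=0$) cleanly gives strictness.
\item For the lower bound you derive the invariant two-dimensional block yourself. You then use $\lambda_{\min}=D/\lambda_{\max}>D/T$ instead of solving the characteristic equation.
\end{itemize}

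Your block has $T/2=c=1/\alpha_k^{BB_2}$ and $D=c/\alpha_k^{BB_1}$. Its eigenvalues $T/2\pm\sqrt{T^2/4-D}$ are therefore exactly the paper's formulas, so you are in effect reproving the cited lemma. Since the complement eigenvalue $c$ lies between them, they are also the global extremes.

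What your version buys is self-containment. It also verifies explicitly that $\bar B_k\succ 0$, which is needed to divide by $g_k^T\bar B_kg_k$; the paper leaves this implicit. What the paper's closed form buys is transparency about tightness. As $\alpha_k^{BB_1}/\alpha_k^{BB_2}\to\infty$, $\lambda_{\max}\,\alpha_k^{BB_2}\to 2$ and $2\alpha_k^{BB_1}\lambda_{\min}\to 1$, so the eigenvalue bounds behind the constants $2$ and $\frac12$ are asymptotically sharp.

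One trivial caveat: for $n=1$ there is no unit $u\perp e$, so your decomposition does not exist. In that case $\alpha_k^{AOS}=\alpha_k^{BB_1}=\alpha_k^{BB_2}$ and the claim is immediate.
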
 

\begin{proof} By Lemma 3.1 in \cite{LiuGMAOS2024}, we can obtain the the smallest and largest eigenvalues    of the matrix $ \bar B_k $	in \eqref{GMAOS};
 
$$ \lambda_{\max} = \frac{1}{\alpha_k^{\mathrm{BB}_2}} + \sqrt{\frac{1}{\alpha_k^{\mathrm{BB}_2}}\left(\frac{1}{\alpha_k^{\mathrm{BB}_2}} - \frac{1}{\alpha_k^{\mathrm{BB}_1}}\right)}, \quad \lambda_{\min} = \frac{1}{\alpha_k^{\mathrm{BB}_2}} - \sqrt{\frac{1}{\alpha_k^{\mathrm{BB}_2}}\left(\frac{1}{\alpha_k^{\mathrm{BB}_2}} - \frac{1}{\alpha_k^{\mathrm{BB}_1}}\right)}.$$

It follows that 

$$ \frac{1}{\alpha_k^{\mathrm{BB}_2}} + \sqrt{\frac{1}{\alpha_k^{\mathrm{BB}_2}}\left(\frac{1}{\alpha_k^{\mathrm{BB}_2}} - \frac{1}{\alpha_k^{\mathrm{BB}_1}}\right)} < \frac{1}{\alpha_k^{\mathrm{BB}_2}} + \sqrt{\frac{1}{\alpha_k^{\mathrm{BB}_2}} \cdot \frac{1}{\alpha_k^{\mathrm{BB}_2}}} = 2\frac{1}{\alpha_k^{\mathrm{BB}_2}} , $$
which implies that 
$$ \lambda_{\max}(\bar B_k) < 2\frac{1}{\alpha_k^{\mathrm{BB}_2}}. $$

By the definition of $\lambda_{\min}$, we obtain that
\[\frac{1}{{\alpha _k^{B{B_2}}}} - \sqrt {\frac{1}{{\alpha _k^{B{B_2}}}}\left( {\frac{1}{{\alpha _k^{B{B_2}}}} - \frac{1}{{\alpha _k^{B{B_1}}}}} \right)}  > \frac{1}{{\alpha _k^{B{B_2}}}} - \sqrt {\frac{1}{{\alpha _k^{B{B_2}}}}\frac{1}{{\alpha _k^{B{B_2}}}} - \frac{1}{{\alpha _k^{B{B_1}}}}\frac{1}{{\alpha _k^{B{B_2}}}} + \frac{1}{4}{{\left( {\frac{1}{{\alpha _k^{B{B_1}}}}} \right)}^2}}  =  \frac{1}{{2\alpha _k^{B{B_1}}}},
\]

 whih yields that 

\[
\lambda_{\min}(\bar B_k) \geq \frac{1}{{2\alpha _k^{B{B_1}}}}.
\]

According to \eqref{GMAOS}, we have 	\[\frac{1}{2}\alpha _k^{B{B_2}} < \alpha _k^{AOS} < 2\alpha _k^{B{B_1}}.\]
The proof is completed. \qed

\end{proof}

By Theorem 3.1, the approximately optimal stepsize  \eqref{GMAOS} can be expressed as the convex combination of two famous BB stepsizes: \[\alpha _k^{AOS} = \frac{t}{2}\alpha _k^{B{B_2}} + 2t\alpha _k^{B{B_1}},\;\;\text{where}\;t \in \left( {0,1} \right). \]
Therefore, we can establish the global convergence and the R-linear convergence rate of GM\_AOS by the ways similar to that in \cite{Raydan1993On,DaiLiao2002Rlinear}. As a result, we only  present the theoretical results.

\begin{theorem}
	Suppose that $f\left( x \right)  $ is given by \eqref{eq:QuadraticMin}. Then,   the sequence $\left\lbrace x_k \right\rbrace $     generated by  GM\_AOS  converges globally to   the minimal  point $x^*$ .
	
\end{theorem}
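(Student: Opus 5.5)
The plan is to reduce GM\_AOS to a gradient method whose stepsize is trapped, uniformly in $k$, in a fixed interval $[\alpha_{\min},\alpha_{\max}]$ determined by the spectrum of $A$. I would then apply Raydan's argument \cite{Raydan1993On} for the BB method, or equivalently Dai–Liao's framework \cite{DaiLiao2002Rlinear}. That argument only needs the stepsize to be the reciprocal of a Rayleigh-type quantity of $A$, up to bounded factors.

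\textbf{Step 1: uniform stepsize bounds.} Since $f$ is quadratic, $y_{k-1}=As_{k-1}$. If $g_{k-1}=0$ or $s_{k-1}=0$ the method has already terminated at $x^*$, so assume $s_{k-1}\neq0$. Then
\[
\frac{1}{\alpha_k^{BB_1}}=\frac{s_{k-1}^TAs_{k-1}}{\|s_{k-1}\|^2}\in[\lambda_1,\lambda_n],
\qquad
\frac{1}{\alpha_k^{BB_2}}=\frac{s_{k-1}^TA^2s_{k-1}}{s_{k-1}^TAs_{k-1}}\in[\lambda_1,\lambda_n],
\]
where $0<\lambda_1\le\cdots\le\lambda_n$ are the eigenvalues of $A$. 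By Theorem 3.1,
\[
\frac{1}{2\lambda_n}\le \tfrac12\alpha_k^{BB_2}<\alpha_k^{AOS}<2\alpha_k^{BB_1}\le \frac{2}{\lambda_1}.
\]
So $1/\alpha_k^{AOS}\in[\lambda_1/2,\,2\lambda_n]$ for all $k\ge1$. For $k=0$ I take any positive initial stepsize. I would also check that the denominator in \eqref{GMAOS} is positive; this holds because $\bar B_k$ is positive definite, as the proof of Theorem 3.1 shows.

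\textbf{Step 2: eigencomponent recursion.} Write $g_k=\sum_i\mu_i^{(k)}v_i$ in an orthonormal eigenbasis of $A$. Then $g_{k+1}=(I-\alpha_kA)g_k$, so
\[
\mu_i^{(k+1)}=(1-\alpha_k\lambda_i)\mu_i^{(k)}.
\]
The bounds from Step 1 give $|1-\alpha_k\lambda_i|\le \max(1,\,2\lambda_n/\lambda_1)=:c$ for every $i$ and $k$. Hence each component can grow by at most the fixed factor $c$ per step. For $\lambda_1$, $1-\alpha_k\lambda_1\in(1-2,\,1-\lambda_1/(2\lambda_n)]$, and I need $|\mu_1^{(k)}|\to0$.

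\textbf{Step 3: the main obstacle, Raydan's induction.} I will show by induction on $\ell$ that $\mu_i^{(k)}\to0$ for $i=1,\dots,\ell$. The case $\ell=1$ is the crux. The upper bound $\alpha_k<2/\lambda_1$ gives $|1-\alpha_k\lambda_1|<1$, but this is not uniformly bounded away from $1$ (BB's own argument has $\alpha_k\le1/\lambda_1$ instead).

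I would handle this by exploiting that $\alpha_k^{AOS}$ is the reciprocal of a Rayleigh quotient of $\bar B_k$ at $g_k$. Its inverse therefore lies between $\lambda_{\min}(\bar B_k)$ and $\lambda_{\max}(\bar B_k)$, and both are controlled by $1/\alpha_k^{BB_1}$ and $1/\alpha_k^{BB_2}$, which are weighted averages of the $\lambda_i$ with weights $(\mu_i^{(k-1)})^2$. If the components $i\le\ell$ are small, those weights concentrate on $\lambda_{\ell+1},\dots,\lambda_n$. That pushes $1/\alpha_k$ into the range $[\lambda_{\ell+1}/2,\,2\lambda_n]$, which yields a contraction factor for component $\ell+1$ in the Raydan style, modulo the factor $2$.

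If this fails because of the factor $2$, the fallback is the following. Theorem 3.1 shows $\alpha_k^{AOS}$ is a retard/bounded perturbation of BB-type stepsizes, $1/\alpha_k\in[\lambda_1/2,2\lambda_n]$. I would then invoke Dai–Liao-type results \cite{DaiLiao2002Rlinear} and Friedlander et al.\ \cite{Friedlander1998Gradient} on gradient methods whose inverse stepsizes are comparable to Rayleigh quotients of $A$ at past gradients. Those results conclude that $g_k\to0$, and therefore that $x_k=x^*+A^{-1}g_k\to x^*$.
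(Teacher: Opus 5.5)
\textbf{There is a genuine gap in Step 3.} Your reduction in Step 1 is exactly what the paper does. It rewrites Theorem 3.1 as ``$\alpha_k^{AOS}$ lies between $\frac12\alpha_k^{BB_2}$ and $2\alpha_k^{BB_1}$'' and then only asserts that convergence follows as in Raydan and Dai--Liao. Your Step 3 does not close, for the reason you flagged yourself.

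Raydan's induction needs a \emph{uniform} contraction. In the base case it needs $\sup_k\alpha_k\lambda_1<2$. In the inductive step it needs $\alpha_k\lambda_{\ell+1}\le c<2$ whenever the first $\ell$ components of $g_{k-1}$ carry small relative weight $\theta$. The constant $2$ in $\alpha_k^{AOS}<2\alpha_k^{BB_1}$ is exactly the borderline value. Your inductive bound $1/\alpha_k\ge\lambda_{\min}(\bar B_k)\ge p/2\ge(1-\theta)\lambda_{\ell+1}/2$ only gives $\alpha_k\lambda_{\ell+1}\le 2/(1-\theta)$. So $|1-\alpha_k\lambda_{\ell+1}|$ may even exceed $1$: ``modulo the factor $2$'' is precisely where the contraction disappears.

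The fallback does not rescue this, for three reasons.
\begin{itemize}
\item An interval condition alone cannot work. The constant step $\alpha_k\equiv 1.9/\lambda_1$ lies in your interval $[1/(2\lambda_n),2/\lambda_1]$, yet it diverges for generic $x_0$ once $\lambda_n>\frac{2}{1.9}\lambda_1$.
\item The theorem of Friedlander et al.\ covers stepsizes that are exactly retarded Rayleigh quotients of powers of $A$, and $\alpha_k^{AOS}$ is not of that form.
\item Dai's Property-A framework, which generalizes the Dai--Liao analysis, requires $\alpha_k^{-1}\ge\lambda_1$. Your bounds do not give this. Indeed $\lambda_{\min}(\bar B_k)<\lambda_1$ when $g_{k-1}$ is close to, but not exactly, an eigenvector for $\lambda_1$.
\end{itemize}

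\textbf{How to close it.} What is missing is a constant strictly below $2$, and you get it by keeping the exact eigenvalue rather than its factor-$2$ consequence. Put $p=1/\alpha_k^{BB_1}\ge\lambda_1$ and $q=1/\alpha_k^{BB_2}\le\lambda_n$. Then
\[
\lambda_{\min}(\bar B_k)=q-\sqrt{q(q-p)}=\frac{p}{1+\sqrt{1-p/q}},\qquad \frac{p}{q}\ge\frac{1}{\kappa},\quad \kappa=\frac{\lambda_n}{\lambda_1}.
\]
Since $1/\alpha_k^{AOS}$ is the Rayleigh quotient of $\bar B_k$ at $g_k$, this yields
\[
\frac{1}{2\lambda_n}<\alpha_k^{AOS}\le\bigl(1+\sqrt{1-1/\kappa}\bigr)\,\alpha_k^{BB_1}=:c\,\alpha_k^{BB_1},\qquad c<2 .
\]
The base case now holds: $|1-\alpha_k\lambda_1|\le\max\{1-\frac{1}{2\kappa},\,c-1\}<1$.

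For the inductive step, impose the weight condition on $g_{k-1}$, since $\alpha_k$ depends on $g_{k-1}$ through $p,q$ (retard one, as in Raydan's BB proof). Then $p\ge(1-\theta)\lambda_{\ell+1}$ gives $\alpha_k\lambda_{\ell+1}\le c/(1-\theta)<2$ for $\theta<1-c/2$. Together with $\alpha_k\lambda_{\ell+1}\ge\lambda_{\ell+1}/(2\lambda_n)$, this is a uniform contraction factor below $1$.

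The rest of Raydan's argument carries over unchanged. The one-step growth of any component is at most $2\kappa$, because $\alpha_k\lambda_i\le c\lambda_n/\lambda_1<2\kappa$.
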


\begin{theorem}
	Suppose that $f\left( x \right)  $ is given by \eqref{eq:QuadraticMin}. Then,   the sequence $\left\lbrace x_k \right\rbrace $     generated by  GM\_AOS converges R-linearly to   the minimal  point $x^*$ .
	
\end{theorem}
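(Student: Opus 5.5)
The plan is to fit GM\_AOS into the framework of Dai and Liao \cite{DaiLiao2002Rlinear}. There, R-linear convergence is shown for any gradient method on \eqref{eq:QuadraticMin} whose stepsize is bounded between positive constants multiplying the Rayleigh quotients of $A$ at earlier gradients. First, since $f$ is quadratic, $y_{k-1}=As_{k-1}=-\alpha_{k-1}Ag_{k-1}$. Hence
\[
\frac{1}{\alpha_k^{BB_1}}=\frac{s_{k-1}^TAs_{k-1}}{\|s_{k-1}\|^2},\qquad \frac{1}{\alpha_k^{BB_2}}=\frac{s_{k-1}^TA^2s_{k-1}}{s_{k-1}^TAs_{k-1}},
\]
and both lie in $[\lambda_1,\lambda_n]$, where $0<\lambda_1\le\dots\le\lambda_n$ are the eigenvalues of $A$. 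Combining this with Theorem 3.1 gives
\[
\frac{1}{2\lambda_n}\le \tfrac12\alpha_k^{BB_2}<\alpha_k^{AOS}<2\alpha_k^{BB_1}\le\frac{2}{\lambda_1}.
\]
Thus the stepsizes are uniformly bounded above and below. (Theorem 3.2, global convergence, is the special case that would follow from the same argument.)

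Next, I would work in the eigenbasis of $A$. Write $g_k=\sum_i \mu_k^{(i)}v_i$. Since $g_{k+1}=(I-\alpha_kA)g_k$, each component obeys $\mu_{k+1}^{(i)}=(1-\alpha_k\lambda_i)\mu_k^{(i)}$. The Dai--Liao property needed is
\[
\alpha_k^{-1}\in\bigl[c_1\,\tfrac{g_{k-1}^TAg_{k-1}}{\|g_{k-1}\|^2},\; c_2\,\tfrac{g_{k-1}^TAg_{k-1}}{\|g_{k-1}\|^2}\bigr]
\]
or a weaker condition of the same type. Here $s_{k-1}=-\alpha_{k-1}g_{k-1}$, so $1/\alpha_k^{BB_1}$ is exactly the Rayleigh quotient $R(g_{k-1})=g_{k-1}^TAg_{k-1}/\|g_{k-1}\|^2$, and $1/\alpha_k^{BB_2}=R(A^{1/2}g_{k-1})\ge R(g_{k-1})$. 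The upper bound on $\alpha_k$ therefore gives $\alpha_k^{-1}\ge \tfrac12 R(g_{k-1})$, which is the Rayleigh-quotient lower bound with constant $1/2$. The other side, $\alpha_k^{-1}<2/\alpha_k^{BB_2}$, is a Rayleigh quotient of a different vector, so it does not directly match. However, in Dai--Liao's induction only the ``large-step'' side matters: one needs the step not to exceed a multiple of the inverse Rayleigh quotient, in order to control the components in the small eigenvalues. On the other side, uniform boundedness $\alpha_k\le 2/\lambda_1$ suffices to bound the growth factor $|1-\alpha_k\lambda_i|\le 2\lambda_n/\lambda_1-1$.

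Then I would run the Dai--Liao induction on the eigencomponents. For the smallest-eigenvalue component, $|\mu^{(1)}_{k+1}|\le(1-\lambda_1/(2\lambda_n))|\mu_k^{(1)}|$ contracts every step, because $\alpha_k\ge 1/(2\lambda_n)$ and also $\alpha_k\lambda_1<2$. For larger indices $i$, one argues inductively. If the components $1,\dots,i-1$ are already small relative to $\|g_{k-1}\|$, then the Rayleigh quotient is at least about $\lambda_i$. Consequently $\alpha_k\le 2/R(g_{k-1})$ is at most about $2/\lambda_i$, and $|1-\alpha_k\lambda_i|$ is close to at most $1$. Combined with the bounded-growth estimate, this yields that $|\mu^{(i)}_k|$ decays R-linearly with a constant depending only on $\lambda_n/\lambda_1$. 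This gives $\|g_k\|\le c\,\theta^k\|g_0\|$ with $\theta<1$, and hence $\|x_k-x^*\|\le\lambda_1^{-1}\|g_k\|$ converges R-linearly.

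The main obstacle is this inductive step. The factor $2$ in $\alpha_k<2/R(g_{k-1})$ only gives $|1-\alpha_k\lambda_i|\le 3$ rather than a contraction, whereas Dai--Liao's BB argument uses the factor $1$. I would handle it as in Dai--Liao's general ``$\alpha_k^{-1}\ge \gamma R(g_{k-\ell})$'' lemma. That lemma allows any fixed $\gamma>0$, with the contraction obtained over a window of steps in which the lower-index components have already shrunk. If that generality is insufficient, I would instead invoke Dai's result on gradient methods with ``property (A)'', which covers stepsizes sandwiched between constant multiples of retarded Rayleigh quotients of $A$ and $A^{1/2}$-weighted vectors.
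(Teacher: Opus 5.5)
Your route is the one the paper gestures at: Theorem 3.1, then the claim that R-linear convergence follows ``similarly'' to Raydan and Dai--Liao, with no further detail. But the step where you dispose of the factor $2$ fails, and that step is the whole difficulty.

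Everything you actually establish about $\alpha_k$ is also satisfied by $\alpha_k=(2-\delta_k)/\lambda$ on $A=\lambda I$, with $\delta_k\in(0,1)$ and $\sum_k\delta_k<\infty$. That covers the bracket $\frac12\alpha_k^{BB_2}<\alpha_k<2\alpha_k^{BB_1}$, the bounds $\frac{1}{2\lambda_n}\le\alpha_k\le\frac{2}{\lambda_1}$, and $\alpha_k^{-1}>\frac12 R(g_{k-1})$. For this sequence $g_{k+1}=-(1-\delta_k)g_k\not\to 0$. So no lemma can finish the proof from these facts alone.

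In particular, there is no Dai--Liao lemma valid ``for any fixed $\gamma>0$''. On $A=\lambda I$, $\alpha_k\equiv 3/\lambda$ has $\alpha_k^{-1}=\frac13R(g_{k-1})$ and gives $g_{k+1}=-2g_k$. What is needed is $\alpha_k\lambda_{l+1}\le 2-\delta$ with a \emph{uniform} $\delta>0$. Your diagnosis ``$|1-\alpha_k\lambda_i|\le 3$'' is also off: the danger is that this factor tends to $1$, not that it exceeds $1$.

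The same defect affects your first-component estimate. From $\alpha_k\lambda_1<2$ you do not get the factor $1-\lambda_1/(2\lambda_n)$; that needs $\alpha_k\lambda_1\le 2-\lambda_1/(2\lambda_n)$.

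Dai's Property (A) does not apply verbatim either. It requires $\lambda_1\le\alpha_k^{-1}$ and $\alpha_k^{-1}\ge\frac23\lambda_{l+1}$, and the AOS step need not satisfy either. For example, with $\lambda_1=1$, $\lambda_n=100$ and $g_{k-1}$ carrying weight $10^{-3}$ on the top eigenvector, $\lambda_{\min}(\bar B_k)\approx 0.57<\lambda_1$. Since $g_k\in\mathrm{span}\{s_{k-1},y_{k-1}\}$, a suitable $\alpha_{k-1}$ aligns $g_k$ with that eigenvector, giving $\alpha_k>1/\lambda_1$.

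The missing ingredient is a uniform margin, which you can extract from the eigenvalue formula used in Theorem 3.1. Set $p=1/\alpha_k^{BB_2}$ and $q=1/\alpha_k^{BB_1}$. Since $\lambda_1\le q\le p\le\lambda_n$,
\[
\lambda_{\min}(\bar B_k)=p-\sqrt{p(p-q)}=\frac{q}{1+\sqrt{1-q/p}}\ \ge\ \frac{q}{1+\sqrt{1-1/\kappa}},\qquad \kappa=\lambda_n/\lambda_1 .
\]
Hence $\alpha_k\le(1+\sqrt{1-1/\kappa})\,\alpha_k^{BB_1}$, with a constant strictly below $2$. This gives two things:
\begin{itemize}
\item[(a)] $|1-\alpha_k\lambda_1|\le 1-\frac{1}{2\kappa}$, which is the first-component contraction you claimed.
\item[(b)] If components $1,\dots,l$ carry at most a fraction $1/M_2$ of $\|g_{k-1}\|^2$, then $R(g_{k-1})\ge(1-1/M_2)\lambda_{l+1}$. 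So $\alpha_k\lambda_{l+1}\le (1+\sqrt{1-1/\kappa})/(1-1/M_2)<2$ once $M_2$ is large.
\end{itemize}

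You would then re-run Dai's Property (A) argument with these modifications:
\begin{itemize}
\item[(1)] take $m=2$;
\item[(2)] replace $\frac23$ by $(1-1/M_2)/(1+\sqrt{1-1/\kappa})>\frac12$;
\item[(3)] replace $\lambda_1\le\alpha_k^{-1}$ by $\alpha_k\le(1+\sqrt{1-1/\kappa})/\lambda_1$.
\end{itemize}
You must check that his proof uses these hypotheses only to get a contraction factor below $1$ and bounded growth; citing his result is not enough.
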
 

\section{Numerical experiments}
 In the section, we conduct some numerical experiments to validate  the effectiveness of the gradient method with approximately optimal stepsize (GM$ \_ $AOS), the conjugate gradient method  with approximately optimal stepsize (CG$ \_ $AOS) and the BFGS  method  with approximately optimal stepsize (BFGS$ \_ $AOS). Since the numerical numerical results about GM$ \_ $AOS  can be found in those papers \cite{LiuGMAOS2018,LiuGMAOS2024,LiuGMAOSOPTL2018,LiuGMAOS2018NM,LiuGMAOS2018JCAM,LiuGMAOS2022RAIRO,LiuGMAOS2019JOTA}, we do not list them here and only   present these numerical results about CG$ \_ $AOS  and BFGS$ \_ $AOS. 
  
\subsection{Some numerical results about    CG$ \_ $AOS  }

The termination condition: ${\left\| {{g_k}} \right\|_\infty } < {10^{ - 6}}$;  the initial point: $ x_0=[1,1,\cdots, 1]^T   $; the test methods: BB， method \cite{BB1988}, HDL method \cite{Huang2019HDL} and CG$ \_ $AOS; the test problem is problem \eqref{eq:QuadraticMin} with the following $ A $ and $ b $; the experimental software: Matlab. 
 
 \textbf{Problem 1:} $ A =\left(a_{ij} \right)_{n \times n} $ is the diagonal   matrix with the main diagonal elements: 
 $  a_{11}=0.001, a_{22}=1, a_{33}=2, \cdots, a_{nn}=n-1   ; \;b=[0,0,\cdots, 0]^T.  $
 
 \vskip5mm
 \begin{center}
 	\small{{\small Table 1~~The number of iteration number on Problem 1 } \\[2mm]
 		\begin{tabular}{|c |c|c|c|c|}  \hline
 			       & n=100  & n=500  & n=1000  & n=5000 \\ \hline
 			BB   & 795    & 3611    &	5165   & 14221   \\ \hline
 			HDL   & 377    & 657    & 641   & 1783  \\ \hline   
 			CG$ \_ $AOS   & 291    & 397    &	553   &  861 \\ \hline      
 			
 		\end{tabular}}
 		\\
 	\end{center}

 	\textbf{Problem 2:} 									$$ A=D^T*D, \;\;\; \text{其中}\;\; D=100*\left( rand(n,n)-5\right),\;\;  b =100*(rand(n,1)-0.5).  $$

 	\vskip5mm
 	\begin{center}
 		\small{{\small Table 2~~The number of iteration number on Problem 2} \\[2mm]
 			\begin{tabular}{|c |c|c|c|c|c|c|c |c|c|c}  \hline
 			      & n=100    & n=200  & n=300  & n=100 & n=200  & n=300   & n=100 & n=200  & n=300 \\ \hline
 				BB      & 19946    & 16611    & $ > $50000     & $ > $50000    & 9777 & 20385  &  4204   &6305 &$ > $50000 \\ \hline
 				HDL    & 6279    & 1836    & $ > $50000   &  2108   &   3693  & 5927   &     2034    &2565 &$ > $50000  \\ \hline   
 				CG$ \_ $AOS       & 4392    & 3048    &	  12147   &  6875    & 4107   & 5748    &    2631   &2608  &12535  \\ \hline      

 			\end{tabular}}
 			\\
 		\end{center}
 		
 	In Table 2, the first three columns ($ n=100,200,300 $) corresponds to the random seed 2, the second  three columns corresponds to  the random seed 5  and the third three columns corresponds to  the random seed 8.

 		\textbf{Problem 3: }  $ A =\left(a_{ij} \right)_{n \times n} $ is   the diagonal   matrix with the following main diagonal elements:  
 		$  a_{11} = \kappa (A) =10^5, \;\; a_{nn} = 1,\;\; \left[a_{22}, a_{33}, \cdots, a_{(n-1)(n-1)}\right]^T= a_{nn}+\left(a_{11}-a_{nn} \right) * rand(n-2,1),    b =\left[0,0, \dots 0 \right]^T \in R^{n}$.  
 		
 		\vskip5mm
 		\begin{center}
 			\small{{\small Table 3~~The number of iteration number on Problem 3 } \\[2mm]
 				\begin{tabular}{|c |c|c|c|c|c|c|c |c|c }  \hline
 					   & n=100    & n=500  & n=1000  & n=5000  & n=10000 \\ \hline
 					BB   & 4785    & 5596    &3841     &6024    & 5057 \\ \hline
 					HDL   & 192    & 624    &725   &  1034   &   2290    \\ \hline   
 					CG$ \_ $AOS &   564    & 766    &703   &   859    & 1214     \\ \hline      

 				\end{tabular}}
 				\\
 			\end{center}
 			
As shown in Table  1, we can see that CG$ \_ $AOS demonstrates remarkable numerical advantages over the BB method and the HDL method for Problem 1. Table 2 shows that CG\_AOS is superior to the BB method and outperforms the HDL method for Problem 2 when  $n$ is large. The same conclusion holds for Problem 3. The above three tables show  that  CG$ \_ $AOS is very efficient.
 			
%
%
%
%

\subsection{Some numerical results about BFGS$ \_ $AOS }  
    The termination condition: ${\left\| {{g_k}} \right\|_\infty } < {10^{ - 6}}$;  the initial point: $ x_0=[1,1,\cdots, 1]^T   $;  the test problem is problem \eqref{eq:QuadraticMin} with   $ A =\left(a_{ij} \right)_{n \times n} $ is the diagonal   matrix with the main diagonal elements 
    $ a_{11}=0.001, a_{22}=1, a_{33}=2, \cdots, a_{nn}=n-1   ;   b=[0,0,\cdots, 0]^T;  $   the test method is BFGS\_1 (BFGS method with the unit  stepsize 1)  and   BFGS\_AOS (BFGS method with the approximately optimal stepsize \eqref{AOS}). It is noted that the convergence of  BFGS\_1 is established under the condition that ${B_0} = LC$, where $C$ satisfies $\mu C \preceq    A \preceq    LC $ \cite{Nesterov2022Rates}. Here $\mu$ is the strong convexity parameter, and $L$ is the Lipschitz constant of the gradient. It means that if this condition is not satisfied, then BFGS\_1 may not converge.

%
%
%

		\begin{center}
			\small{{\small Table 4~~The number of iteration number   } \\[2mm]
				\begin{tabular}{|c|c|c |c|c|c|c|c|c|c |c|c|c|c }  \hline
					& $\begin{array}{l}
					{ {n = 100,}}\\
					{ {B_0(1)}}
					\end{array}$ & $\begin{array}{l}
					{ {n = 100,}}\\
					{ {B_0(2)}}
					\end{array}$ & $\begin{array}{l}
					{ {n = 100,}}\\
					{ {B_0(3)}}
					\end{array}$    & $\begin{array}{l}
					{ {n = 500,}}\\
					{ {B_0(1)}}
					\end{array}$  & $\begin{array}{l}
					{ {n = 500,}}\\
					{ {B_0(2)}}
					\end{array}$   & $\begin{array}{l}
					{ {n = 500,}}\\
					{ {B_0(3)}}
					\end{array}$  & $\begin{array}{l}
					{ {n = 1000,}}\\
					{ {B_0(1)}}
					\end{array}$   & $\begin{array}{l}
					{ {n = 1000,}}\\
					{ {B_0(2)}}
					\end{array}$   & $\begin{array}{l}
					{ {n = 1000,}}\\
					{ {B_0(3)}}
					\end{array}$    \\ \hline
					BFGS$ \_ $1  & F     & 225   & 322  & F      &F      &372    &  F   &  F   & 374\\ \hline
					    
					BFGS$ \_ $AOS & 108 &120 & 213       & 506    &471   &   334    &  834   & 701  &293  \\ \hline      

				\end{tabular}}
				\\
			\end{center}

 The numerical results are reported in Table 4. In Table 4, the initial matrix $ B_0(1)=1000*I$,  $ B_0(2)= I$,   $ B_0(3)= 0.001*I$; ``F'' represents that BFGS\_1 fails to solve the problem due to  NaN in the iterative process. As shown in Table  4, we can see that BFGS\_AOS can solve successfully more problems than BFGS\_1, and is superior to BFGS\_1. Therefore, BFGS\_AOS is very efficient.

\section{Conclusions and some open problems}	
	In the paper we present a unified framework for inexact adaptive stepsizes in the gradient methods, the conjugate gradient methods  and the  quasi-Newton methods for strictly convex quadratic optimization, which is called approximately optimal stepsize.   Some numerical results demonstrate that approximately optimal stepsize is useful and efficient.
	
	We have established the global convergence and convergence rate of GM\_AOS. Since the approximately optimal stepsize does not possess  the property $g_{k+1}^Td_k =0$ generally, it is   difficult to establish the   global convergence and convergence rate of CG\_AOS and BFGS\_AOS as the existing theoretical framework of conjugate gradient method and quasi-Newton method with the exact stepsize for strictly convex quadratic optimization can not be used. 
As for the unified framework for inexact adaptive stepsizes in the gradient methods, the conjugate gradient method and the quasi-Newton method for        strictly convex quadratic optimization,	we raise five open problems as follows:
	
%
%
	
	(1)Can the conjugate gradient method with approximately optimal stepsize (CG\_AOS)   converge theoretically  ?  If yes, what is the convergence rate?
	
 	(2)Can the BFGS method with approximately optimal stepsize (BFGS\_AOS)   converge theoretically  ?  If yes, what is the convergence rate ? 
	
	(3)Can the  gradient  methods  with approximately optimal stepsize    be extended from  strictly convex optimization to general convex optimization ?  
	
		(4)Can  the conjugate gradient methods with approximately optimal stepsize    be extended from strictly convex quadratic optimization to general convex optimization ? 
		
	(5)Can the quasi-Newton methods with approximately optimal stepsize be extended from strictly convex quadratic optimization to general convex optimization ?
	
 The above open problems is important, and we  will focus on these problems in the	future.
					
					\begin{acknowledgements}  I would like to thank Professor  Yu-Hong Dai in Chinese Academy of Sciences for his valuable and useful suggestions.   This research is supported by National Science Foundation of China (No.12261019).  
					\end{acknowledgements}

\noindent\textbf{Data availability.}					
	The datasets generated during and/or analysed during the current study are available from the corresponding author on reasonable request.	
	
\noindent\textbf{Conflict of interest.	}
	The authors declare no competing interests.

				\end{document}